\newcounter{iii}
\newcommand{\bb}{{\mathcal B}}
\newcommand{\aaa}{{\mathcal A}}
\newcommand{\ff}{\mathcal F}
\theoremstyle{plain}
\newtheorem{thm}{Theorem}
\newtheorem{lem}{Lemma}
\newtheorem{pro}{Problem}
\newtheorem{cor}{Corollary}
\theoremstyle{definition}
\newtheorem{defi}{Definition}
\newtheorem{thrm}{Theorem}
\title{Regular bipartite graphs and intersecting families}
\author{Andrey Kupavskii\footnote{Moscow Institute of Physics and Technology, Ecole Polytechnique F\'ed\'erale de Lausanne; Email: {\tt kupavskii@yandex.ru} \ \ Research of Andrey Kupavskii is supported by the grant RNF~16-11-10014.}, Dmitriy Zakharov}
\date{}
\begin{document}
\maketitle
\begin{abstract} In this paper we present a simple unifying approach to prove several statements about intersecting and cross-intersecting families, including the Erd\H os--Ko--Rado theorem, the Hilton--Milner theorem, a theorem due to Frankl concerning the size of intersecting families with bounded maximal degree, and versions of results on the sum of sizes of non-empty cross-intersecting families due to Frankl and Tokushige. Several new stronger results are also obtained.

Our approach is based on the use of regular bipartite graphs. These graphs are quite often used in Extremal Set Theory problems, however, the approach we develop proves to be particularly fruitful.

\end{abstract}
\section{Intersecting families}\label{sec1}
Let $[n]:=\{1,\ldots,n\}$ denote the standard $n$-element set. The family of all $k$-element subsets of  $[n]$ we denote by ${[n]\choose k}$, and the set of all subsets of $[n]$ we denote by  $2^{[n]}$. Any subset of $2^{[n]}$ we call a \textit{family}. We say that a family is \textit{intersecting}, if any two sets from the family intersect.

Probably, the first theorem, devoted to intersecting families, was the famous  theorem of Erd\H os, Ko and Rado:
\begin{thrm}[Erd\H os, Ko, Rado, \cite{EKR}]\label{ekr} Let $n\ge 2k>0.$ Then for any intersecting family  $\ff\subset {[n]\choose k}$ one has $|\ff|\le {n-1\choose k-1}$.
\end{thrm}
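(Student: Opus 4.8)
The plan is to run the classical Katona cycle argument, but packaged exactly as the paper advertises: as a double count on a biregular bipartite incidence graph. Let $\Pi$ denote the set of all $(n-1)!$ cyclic orderings of $[n]$, and form a bipartite graph $G$ with parts $\Pi$ and ${[n]\choose k}$, joining a cyclic ordering $\pi$ to a set $A$ exactly when $A$ is an \emph{arc} of $\pi$, i.e.\ the elements of $A$ occur consecutively (cyclically) in $\pi$. Since $1\le k\le n-1$, each $\pi$ has exactly $n$ pairwise distinct $k$-arcs, so every vertex on the $\Pi$-side has degree $n$; and a direct count (rotate so that $A$ occupies the first $k$ positions, then order $A$ and its complement) shows every $A\in{[n]\choose k}$ has degree $d:=k!\,(n-k)!$. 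In particular $G$ is biregular with $(n-1)!\cdot n=\binom nk\cdot d$, i.e.\ $d=n!/\binom nk$.

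The only substantive step is the following local bound: if $\ff\subset{[n]\choose k}$ is intersecting, then \emph{each $\pi\in\Pi$ has at most $k$ neighbours in $\ff$} — that is, among the $n$ $k$-arcs of a fixed cyclic ordering, at most $k$ can pairwise intersect. To see this, normalise $\pi=(1,2,\dots,n)$ and suppose the arc $A_1=\{1,\dots,k\}$ lies in $\ff$ (if no arc of $\pi$ lies in $\ff$ we are done). Using $n\ge 2k$, the arcs that meet $A_1$ are exactly the $2k-1$ arcs starting at positions in $\{1\}\cup\{2,\dots,k\}\cup\{n-k+2,\dots,n\}$. Pairing the arc starting at $1+t$ with the arc starting at $n-k+1+t$ for $t=1,\dots,k-1$, one checks (again using $n\ge 2k$) that the two arcs in each pair are disjoint; hence $\ff$ contains at most one member of each of these $k-1$ pairs, plus possibly $A_1$ itself, giving at most $k$ arcs of $\pi$ in $\ff$.

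Now double count the edges of $G$ incident to $\ff$. On the one hand this number is $\sum_{\pi\in\Pi}\bigl|\{A\in\ff: A\text{ is an arc of }\pi\}\bigr|\le (n-1)!\,k$ by the local bound. On the other hand it equals $\sum_{A\in\ff}\deg_G(A)=|\ff|\,d$. Combining, $|\ff|\le (n-1)!\,k/d=\frac{k}{n}\binom nk=\binom{n-1}{k-1}$, which is the desired bound (and it is attained by the full star $\{A\in{[n]\choose k}:1\in A\}$).

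The main obstacle is the local bound of the second paragraph — Katona's observation that a cycle of length $n\ge 2k$ carries at most $k$ pairwise-intersecting $k$-arcs; everything else is just bookkeeping about a regular bipartite graph. I would expect the paper to abstract the third paragraph into a general double-counting lemma for biregular (or regular) bipartite graphs, and then obtain the Hilton--Milner theorem, Frankl's bounded-degree bound, and the cross-intersecting results by feeding the same arc-incidence graph sharper or weighted local estimates.
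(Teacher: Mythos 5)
Your proof is correct: it is Katona's cycle argument, and every step checks out --- the biregularity count $d=k!\,(n-k)!=n!/\binom{n}{k}$, the pairing of the arc starting at $1+t$ with the arc starting at $n-k+1+t$ (disjoint precisely because $n\ge 2k$), the resulting local bound of $k$ arcs per cyclic order, and the final double count giving $|\ff|\le\frac{k}{n}\binom{n}{k}=\binom{n-1}{k-1}$. It is, however, a genuinely different route from the paper's, despite the shared slogan about regular bipartite graphs. The paper does not build a global incidence graph between cyclic orders and $k$-sets; it first reduces to a shifted family, takes the set $F\in\ff$ of maximal lexicographic order, chooses $l$ with $|F\cap[2l-1]|=l$, and exchanges the block $\aaa=\{A: A\cap[2l]=F\cap[2l]\}$ for the block $\bb=\{B: B\cap[2l]=L\}$ with $L=[2l]\setminus F$. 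Regularity enters locally: the disjointness graph between $\aaa$ and $\bb$ is regular, hence has a perfect matching, hence its largest independent set has size $|\bb|=|\aaa|$, so the exchange in Lemma~\ref{lembp} cannot decrease $|\ff|$ while it strictly decreases the lexicographic order; iterating forces the extremal family to be a star. Your version buys a short, self-contained proof of Erd\H os--Ko--Rado with the extremal example visible at the end, but it does not localize the way the paper needs: the exchange lemma is what carries over verbatim to the Hilton--Milner theorem, the cross-intersecting sum bounds, and Frankl's diversity theorem, whereas your closing guess that the paper feeds sharper local estimates into the arc-incidence graph is not what happens and would require quite different arguments for those refinements.
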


It is easy to give an example of an intersecting family, on which the bound from Theorem \ref{ekr} is attained: take the family of all $k$-element sets containing element $1$.


Any family, in which all sets contain a fixed element, we call  \textit{trivially intersecting}. What size can an intersecting family have, provided that it is not trivially (nontrivially) intersecting? For  $n=2k$ it is easy to construct many intersecting families of size ${2k-1\choose k-1}$ by choosing exactly one $k$-set out of each two complementary sets. For $n>2k$ the answer is given by the Hilton-Milner theorem.

\begin{thrm}[Hilton, Milner, \cite{HM}]\label{hm} Let $n> 2k$ and $\mathcal{F}\subset {[n]\choose k}$ be a nontrivially intersecting family. Then $|\mathcal{F}|\le {n-1\choose k-1}-{n-k-1\choose k-1}+1$.
\end{thrm}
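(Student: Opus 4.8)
Since the paper advertises a unified approach via regular bipartite graphs, the plan is to set up an appropriate bipartite graph on ``shifted'' or ``generic'' configurations and extract the bound from a degree count. Concretely, I would first reduce to the case where $\ff$ is maximal with respect to inclusion among nontrivially intersecting families in $\binom{[n]}{k}$; then every $k$-set not in $\ff$ is disjoint from some member of $\ff$. Fix a set $A_0\in\ff$; since $\ff$ is not trivially intersecting, there is some $B\in\ff$ with $\bigcap_{A\in\ff}A\not\supseteq\{x\}$ witnessed so that not all sets contain a common element, and in particular one can find $A_0,B_0\in\ff$ whose intersection has size at most $k-1$ but, more usefully, one can pass to the classical Hilton--Milner extremal configuration $\{F : 1\in F,\ F\cap B_0\neq\varnothing\}\cup\{B_0\}$ as the conjectured extremum and aim to bound $|\ff|$ against it.

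The core step I would carry out is the graph argument. Consider the bipartite graph $G$ whose left vertices are the $k$-sets containing the element $1$ but disjoint from a fixed $(k)$-set $B_0\in\ff$ with $1\notin B_0$, and whose right vertices are certain $k$-sets in $\ff$; join $F$ to $F'$ when $F'=(F\setminus\{1\})\cup\{b\}$ for some $b\in B_0$, i.e. by a single-element swap into $B_0$. Each such left vertex $F$ has exactly $|B_0|=k$ neighbors on the right, so $G$ is left-$k$-regular. The intersecting property forces that for each left vertex $F$ (which is \emph{not} in $\ff$, being disjoint from $B_0$ and containing $1$ only), at least one of its $k$ swap-images must fail to lie in $\ff$ as well --- otherwise one reconstructs a contradiction with nontriviality; more precisely, one shows that the $k$-sets containing $1$ that are \emph{missing} from $\ff$ dominate, via these swaps, enough of the sets in $\ff$ not containing $1$ to bound the latter. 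Counting edges from both sides: the number of $k$-sets containing $1$ that lie in $\ff$ is at most $\binom{n-1}{k-1}$ minus the number of such sets disjoint from $B_0$, which is $\binom{n-k-1}{k-1}$; and the number of sets in $\ff$ not containing $1$ is controlled (ultimately by $1$, i.e. essentially only $B_0$ survives) through the regularity of $G$ and the intersecting condition. Summing gives $|\ff|\le \binom{n-1}{k-1}-\binom{n-k-1}{k-1}+1$.

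The main obstacle will be the second half of the edge count: showing that the sets in $\ff$ \emph{not} containing the fixed element $1$ cannot contribute more than a single extra set, and doing so purely through the bipartite degree inequality rather than through an \emph{ad hoc} combinatorial case analysis. This requires choosing the element ``$1$'' wisely --- presumably as an element of maximum degree in $\ff$, so that a deficiency version of Hall's theorem or a simple double-counting on $G$ forces the complement part to be small --- and verifying that the swap map genuinely lands inside the graph's right part whenever the intersecting condition would otherwise be violated. I would also need the hypothesis $n>2k$ (strictly) precisely here, since it guarantees $\binom{n-k-1}{k-1}<\binom{n-1}{k-1}$ and that the disjointness constraints are non-vacuous; the boundary case $n=2k$ fails, consistent with the remark preceding the theorem. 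Once the regularity and the ``at least one missing neighbor'' claim are in place, the rest is a one-line arithmetic comparison of the two sides of the edge count.
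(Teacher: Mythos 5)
Your proposal does not close the one step that carries all the difficulty of the Hilton--Milner theorem, and the step you defer to is in fact false as you state it. Splitting $\ff$ into $\ff(1)$ (sets containing $1$) and $\ff(\bar 1)$ (sets avoiding $1$), the bound $|\ff(1)|\le {n-1\choose k-1}-{n-k-1\choose k-1}$ is immediate once some $B_0\in\ff$ avoids $1$ (no set of $\ff$ containing $1$ can be disjoint from $B_0$) --- no bipartite graph is needed for that half. The entire content of the theorem is the other half, and it is \emph{not} true that $|\ff(\bar 1)|\le 1$ for a nontrivially intersecting family: e.g.\ $\{F:|F\cap[3]|\ge 2\}$ has many members avoiding $1$. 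What is true is a trade-off: the cross-intersecting pair $\ff(1),\ff(\bar 1)$ forces $|\ff(1)|$ to shrink as $|\ff(\bar 1)|$ grows, and one must show the \emph{sum} is maximized when $|\ff(\bar 1)|=1$. Your proposal has no mechanism for this trade-off; the phrase ``essentially only $B_0$ survives \dots through the regularity of $G$'' is precisely the gap. Moreover your swap graph ($F\mapsto(F\setminus\{1\})\cup\{b\}$, $b\in B_0$) is left-$k$-regular but right-$1$-regular, so the matching/independent-set argument that makes regular bipartite graphs useful here does not apply to it, and the claim that ``at least one swap-image must fail to lie in $\ff$'' is neither justified nor, by itself, strong enough to yield the bound.

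The paper's route is genuinely different at exactly this point. It first shifts $\ff$ (taking care to keep ${[k+1]\choose k}\subset\ff$ so nontriviality survives), then repeatedly replaces the ``last'' block $\aaa=\{A:A\cap[2l]=F\cap[2l]\}$ (built from the lexicographically largest $F\in\ff$) by the block $\bb=\{B:B\cap[2l]=L\}$ of sets containing $1$. The bipartite graph is the \emph{disjointness} graph between $\aaa$ and $\bb$, which is regular on both sides because $|\aaa|=|\bb|={n-2l\choose k-l}$; regularity gives a perfect matching, hence the exchange does not decrease $|\ff|$, and it strictly decreases the lexicographic order. Iterating until the order cannot decrease further while preserving nontriviality forces $F=\{2,\dots,k+1\}$ and $\ff(\bar 1)=\{F\}$, at which point the arithmetic you wrote does finish the proof. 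If you want to salvage your outline, you would need to replace your swap graph by this disjointness graph and add the compression-to-minimal-order argument (or, equivalently, invoke a Kruskal--Katona-type bound on the cross-intersecting pair $\ff(1),\ff(\bar1)$); as written, the proposal is an unproved reduction to a false intermediate claim.
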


Let us mention that the upper bound is attained on the family that contains exactly one  $k$-element set $A$, such that $1\notin A$, and all $k$-element sets containing $1$ and intersecting $A$.

Hilton--Milner theorem shows in a quite strong sense that the example for the Erd\H os--Ko--Rado theorem is unique, moreover, that Erd\H os--Ko--Rado theorem is stable in the sense that if any intersecting family is large enough, then it must be a trivially intersecting family. A theorem due to Frankl, discussed in Section \ref{sec3}, provides us with a much stronger stability result.

In this section we illustrate our new technique by giving a unified proof of Theorems \ref{ekr} and \ref{hm}. The results on cross-intersecting families are discussed in Section~\ref{sec2}, and Frankl's theorem is discussed in Section~\ref{sec3}.\\

The first part of the proof is fairly standard. We show that we may simplify the structure of the families using \textit{shifting}.\\

For a given pair of indices $1\le i<j\le n$ and a set $A \in 2^{[n]}$ define its {\it $(i,j)$-shift} $S_{i,j}(A)$ as follows. If $i\in A$ or $j\notin A$, then $S_{i,j}(A) = A$. If $j\in A, i\notin A$, then $S_{i,j}(A) := (A-\{j\})\cup \{i\}$. That is, $S_{i,j}(A)$ is obtained from $A$  by replacing $j$ with $i$.

Next, define the  $(i,j)$-shift $S_{i,j}(\mathcal F)$ of a family $\mathcal F\subset 2^{[n]}$:

$$S_{i,j}(\mathcal F) := \{S_{i,j}(A): A\in \mathcal F\}\cup \{A: A,S_{i,j}(A)\in \mathcal F\}.$$

We call a family $\mathcal F$ \textit{shifted}, if $S_{i,j}(\mathcal F) = \mathcal F$ for all $1\le i<j\le n$. Note that shifting does not change the cardinality of the family. Note also that we always shift to the left, replacing the larger element with the smaller one.\\


Consider a family $\ff$ of maximal size, satisfying the conditions of Theorem \ref{hm} or  \ref{ekr}. We prove that we may suppose that $\mathcal{F}$ is shifted. It is easy to check that an intersecting family stays intersecting after shifts. This is already sufficient for the Erd\H os--Ko--Rado theorem. In the case of the Hilton--Milner theorem we must make sure that the shifted family we obtain is nontrivially intersecting. For that it is sufficient to do several shifts of  $\mathcal{F}$ and renumberings of the ground set, that result in  ${[k+1]\choose k}\subset \mathcal{F}$. The subfamily ${[k+1]\choose k}$ is invariant under shifts, thus its presence guarantees that $\ff$ will stay nontrivially intersecting after any subsequent shifts.

Do the shifts of $\mathcal{F}$ until either the family is shifted or any new shift will result in $\ff$ becoming trivially intersecting. If the former happens, we end up having a nontrivially intersecting shifted family. And if the latter happens, then after the last shift all the sets of $\mathcal F$ intersect a certain two-element subset $\{x, y\}$. Renumber the elements of $[n]$, so that this pair becomes a pair $\{1, 2\}$. Since $\ff$ has maximal size, it contains all the $k$-element sets, containing both $1$ and $2$. Due to this,  the $(i,j)$-shifts, where $i,j>2$, do not affect the property of $\ff$ to be notrivially intersecting. Therefore, we may assume that $\ff$ is invariant under these shifts.
 Since not all sets in $\ff$ contained $1$ (and not all sets contained $2$), then $\ff$ must contain the sets $\{2,\ldots, k+1\}$ and $\{1, 3, \ldots, k+1\}$.  The condition ${[k+1]\choose k}\subset \mathcal{F}$ is now fulfilled, and thus no shift can affect the nontriviality of the family. 

We note that, after preparing an initial version of the manuscript, we were told by Peter Frankl that exactly the same reduction to a shifted family was used by him to prove the Hilton--Milner theorem in \cite{Fra3}. The remaining part of his proof is completely different from our proof. A somewhat similar use of shifting technique to prove the Hilton--Milner theorem appeared in \cite{FF1}.
\\

Next we pass to the part of the proof where the bipartite graphs are employed.
\begin{defi} For given $A,B\in {[n]\choose k}$ we say that $A$ {\it lexicographically precedes}  $B$, or $A\prec B$, if the smallest element of $A\setminus B$ is smaller than that of $B\setminus A$.
\end{defi}

The \textit{order} of a $k$-element set is its number in the lexicographic order on ${[n]\choose k}$. The \textit{order} of a family $\ff\subset {[n]\choose k}$ is the maximal order among its elements. Among the families of maximal size satisfying the conditions of Theorem \ref{ekr} or \ref{hm} choose a shifted family $\mathcal{F}$ of minimal order. Consider the set $F\in \mathcal{F}$ of maximal order. By the definition of order, if $\ff$ is non-trivial, then $F$ does not contain $1$.

Note that for some $l\ge 1$ one has $|F\cap [2l-1]|=l$, otherwise one of the images of $F$ under several shifts is disjoint from $F$ (this property was first noted by Frankl in \cite{Fra2}). Take maximal such $l$, and put $ L=[2l]\setminus F$. Note that $|L|=l$. We remark that, unless $\mathcal F$ is trivial, we have $l\ge 2$. Consider the families $\mathcal{A}:=\{A\in\binom{[n]}{k}:A\cap [2l]=F\cap [2l]\}$ and $\mathcal{B}:=\{B\in\binom{[n]}{k}:B\cap [2l]=L\}$. The following lemma is the key to the proof.

\begin{lem}\label{lembp} Let $n>2k>0$ and $\ff$ be a nontrivially intersecting shifted family. Suppose that $F\in \ff$ has maximal order. Then, in the notations above, the family $\ff':=(\ff\setminus \aaa)\cup \bb$ is intersecting and has smaller order than $\ff$. Moreover, $|\ff'|\ge |\ff|$.
\end{lem}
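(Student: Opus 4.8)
The plan is to establish the three assertions of Lemma~\ref{lembp} --- that $\ff'$ is intersecting, that it has smaller order than $\ff$, and that $|\ff'|\ge|\ff|$ --- one at a time, using only that $\ff$ is shifted and that $F$ is its (unique) set of maximal order. First I would record the elementary structural facts. Writing $h(j):=|F\cap[2j-1]|-j$, the choice of $l$ gives $h(l)=0$ with $l$ maximal such; since $h$ changes by at most $1$ at each step and $h(k)\le|F|-k=0$, a discrete intermediate-value argument shows $2l\notin F$ (otherwise $h(l+1)\ge0$ would force $h(j)=0$ for some $j>l$). Hence $P:=F\cap[2l]=F\cap[2l-1]$ has size $l$, $L=[2l]\setminus F$ also has size $l$, and $1\in L$, $1\notin P$ because $1\notin F$. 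Moreover $|\aaa|=|\bb|=\binom{n-2l}{k-l}=:N\ge1$. Put $M:=[n]\setminus[2l]$.

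For the intersecting property, pairs inside $\bb$ meet (every set of $\bb$ contains $L\ne\varnothing$), and pairs inside $\ff\setminus\aaa$ meet (it lies in $\ff$); so it suffices that each $A'\in\ff\setminus\aaa$ meets each $B\in\bb$, and since $L\subseteq B$ it is enough to prove $A'\cap L\ne\varnothing$. I would argue by contradiction: if $A'\in\ff$ has $A'\cap L=\varnothing$ then $A'\cap[2l]\subseteq[2l]\setminus L=P$, and if $A'\notin\aaa$ this inclusion is proper, so $p:=\min(P\setminus A')\in P\subseteq[2l-1]$ satisfies $p\in F\setminus A'$, hence $\min(F\setminus A')\le p$. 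But $A'\in\ff$ and $F$ has maximal order, so $A'\preceq F$; as $A'\ne F$ we get $A'\prec F$, i.e.\ $\min(A'\setminus F)<\min(F\setminus A')\le p\le 2l-1$, so $A'$ contains some $q<2l$ with $q\notin F$ --- yet $q\in A'\cap[2l]\subseteq P=F\cap[2l]$ forces $q\in F$, a contradiction. Thus every $A'\in\ff\setminus\aaa$ meets $L$, and $\ff'$ is intersecting. I expect this to be the main obstacle: it is the one place where shiftedness (entering through $A'\preceq F$) and the maximality of $F$ are genuinely needed.

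For the order, $F$ is the unique maximal-order set of $\ff$ and $F\in\aaa$, so every set of $\ff\setminus\aaa$ has order strictly below that of $F$; and for $B\in\bb$ we have $1\in L\subseteq B$ with $1\notin F$, so $\min(B\setminus F)=1$, whereas $1\notin F\setminus B$ (and $\varnothing\ne P\subseteq F\setminus B$) gives $\min(F\setminus B)\ge2$, hence $B\prec F$. So every set of $\ff'$ lies strictly below $F$ in the lexicographic order, and $\ff'$ has smaller order than $\ff$.

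For $|\ff'|\ge|\ff|$, since $\aaa\cap\bb=\varnothing$ one has $|\ff'|=|\ff|+N-|\ff\cap\aaa|-|\ff\cap\bb|$, so the claim becomes $|\ff\cap\aaa|+|\ff\cap\bb|\le N$. This is where the regular bipartite graph does the work. Consider the bipartite graph $G$ on $\aaa\sqcup\bb$ joining $A$ to $B$ exactly when $A\cap B=\varnothing$; since $A\cap[2l]=P$ and $B\cap[2l]=L$ are disjoint, $A\cap B=(A\cap M)\cap(B\cap M)$, so $G$ is the disjointness graph between two copies of $\binom{M}{k-l}$ and is regular of degree $d=\binom{n-k-l}{k-l}\ge1$ (using $n>2k$ and $l\le k$). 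Because $\ff$ is intersecting, $G$ has no edge between $\ff\cap\aaa$ and $\ff\cap\bb$; but in a $d$-regular bipartite graph with sides of size $N$, the absence of edges between $X'$ and $Y'$ yields $d|X'|\le d(N-|Y'|)$, hence $|X'|+|Y'|\le N$. Applying this with $X'=\ff\cap\aaa$ and $Y'=\ff\cap\bb$ gives $|\ff\cap\aaa|+|\ff\cap\bb|\le N$, which completes the proof.
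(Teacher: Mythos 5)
Your proof is correct and follows essentially the same route as the paper: the same lexicographic-maximality argument forces any $A'\in\ff$ disjoint from $L$ to lie in $\aaa$, and the same regular bipartite disjointness graph between $\aaa$ and $\bb$ yields $|\ff\cap\aaa|+|\ff\cap\bb|\le|\bb|$. The only (cosmetic) difference is that you bound the independent set by a direct edge count in the $d$-regular graph, whereas the paper invokes a perfect matching; you also helpfully spell out details the paper leaves implicit, such as $2l\notin F$ and the justification that order-maximality of $F$ forces $F\cap[2l]\subset A'$.
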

\begin{proof}
Let us first show that for any $A,B$, where $B\in \mathcal{B},\ A\in \mathcal{F}\setminus \mathcal{A},$ we have $ A\cap B\not = \emptyset$. The contrary may happen only for $A\in \mathcal{F}$ satisfying $A\cap L = \emptyset$. Since $F$ has the maximal order in  $\ff$, we have $F\cap [2l]\subset A$, and, consequently, $A\in \mathcal{A}$. Therefore, $\mathcal{F}'$ is an intersecting family.

Since all sets from $\bb$ contain $1$, it is clear that the order of  $\ff'$ is smaller than that of $\ff$. Finally, we have $|\mathcal{A}|=\binom{n-2l}{k-l}=|\mathcal{B}|$. Consider a bipartite graph with parts  $\mathcal A,\mathcal B$, and edges connecting disjoint sets. Independent sets in this graph correspond to intersecting subfamilies of  $\mathcal A\cup\mathcal B$. This graph is regular, and, therefore, it contains a perfect matching. Consequently, the largest independent set has size $|\mathcal{B}|$. Therefore $|\mathcal{F}'|\ge|\mathcal{F}|$. \end{proof}

In the case of the  Erd\H os--Ko--Rado theorem, since the family $\ff$ had the smallest order among the families of maximal size, we conclude that $\ff$ must be trivial.\\

In the case of the Hilton--Milner theorem we obtain a contradiction between the properties of $\ff$ and Lemma \ref{lembp}, unless $F = \{2,\ldots, k+1\}$. Indeed, if $F \ne \{2,\ldots, k+1\}$, then $\{2,\ldots,k+1\}\in \ff'$, that is, $\ff'$ is nontrivially intersecting. Therefore, $F=\{2,\ldots, k+1\}$, we have $l=k$, and all sets in $\ff$, different from $F$, contain $1$. Both theorems are proved.\\

We remark that bipartite graphs have been used many times in Extremal Set Theory, in particular, in the proof the famous Sperner theorem. The Sperner theorem states that the size of the largest family on $[n]$ with no two sets containing each other is ${n\choose \lceil n/2\rceil}$. Sperner used matchings in biregular bipartite graphs between ${[n]\choose k}$ and ${[n]\choose k+1}$, where the edges connected sets, one of which contained the other. With the proof above, both cornerstones of Extremal Set Theory have proofs based on matchings in regular bipartite graphs.
\section{Cross-intersecting families}\label{sec2}
We call two families $\mathcal A,\mathcal B$ \textit{cross-intersecting}, if for any $A\in \mathcal A,B\in \mathcal B$ we have $A\cap B\ne \varnothing$. Cross-intersecting families are very useful in the problems on intersecting families, as we will illustrate in Section \ref{sec3}. In this section we discuss and prove some important properties of pairs of cross-intersecting families.


\begin{defi} Let $0<k<n$ and  $0\le m\le {n\choose k}$. Denote $\mathcal L(k,m)$ the family of the first $m$ sets from ${[n]\choose k}$ in the lexicographic order.
\end{defi}

The following theorem, proved independently by Kruskal \cite{Kr} and  Katona \cite{Ka}, is central in Extremal Set Theory.
\begin{thrm}[Kruskal \cite{Kr}, Katona \cite{Ka}]\label{thkk} If the families $\mathcal A\subset{[n]\choose a},\mathcal B\subset {[n]\choose b}$ are cross-intersecting, then  $\mathcal L(a,|\mathcal A|),\mathcal L(b,|\mathcal B|)$ are cross-intersecting as well.
\end{thrm}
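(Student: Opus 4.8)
The plan is to reduce to compressed (shifted) families via the $(i,j)$-shifts introduced in Section \ref{sec1}, and then analyze the shifted situation directly by induction on $n$. First I would recall that shifting a family $\mathcal A$ does not change its size, and that if $\mathcal A, \mathcal B$ are cross-intersecting, then $S_{i,j}(\mathcal A), S_{i,j}(\mathcal B)$ are also cross-intersecting (the standard verification: if $A'\cap B'=\varnothing$ with $A'\in S_{i,j}(\mathcal A)$, $B'\in S_{i,j}(\mathcal B)$, one traces back to a disjoint pair in $\mathcal A,\mathcal B$, using that $i<j$ and that at most one of $A',B'$ can contain $i$ without containing $j$). Applying all shifts $S_{i,j}$ repeatedly, we may assume both $\mathcal A$ and $\mathcal B$ are shifted, while $|\mathcal A|,|\mathcal B|$ are unchanged; so it suffices to prove that a shifted cross-intersecting pair $\mathcal A\subset\binom{[n]}{a}$, $\mathcal B\subset\binom{[n]}{b}$ has $\mathcal L(a,|\mathcal A|),\mathcal L(b,|\mathcal B|)$ cross-intersecting.

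Next I would set up the induction on $n$ (the base case $n=a$ or $n=b$, or small $n$, being trivial since then one family forces the other to be empty or a single set). For a family $\mathcal G\subset\binom{[n]}{g}$ write $\mathcal G_0=\{G\in\mathcal G: n\notin G\}\subset\binom{[n-1]}{g}$ and $\mathcal G_1=\{G\setminus\{n\}: G\in\mathcal G, n\in G\}\subset\binom{[n-1]}{g-1}$. The key structural fact coming from shiftedness is a nesting/cross-intersecting relationship among these pieces: $\mathcal A_0,\mathcal B_0$ are cross-intersecting in $\binom{[n-1]}{\cdot}$, and moreover $\mathcal A_1,\mathcal B_0$ and $\mathcal A_0,\mathcal B_1$ are cross-intersecting (because a disjoint pair one of whose members omits $n$ could be shifted to lower everything into $[n-1]$, contradicting shiftedness). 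I would then apply the induction hypothesis to each of these lower-dimensional cross-intersecting pairs, replacing $\mathcal A_0,\mathcal A_1,\mathcal B_0,\mathcal B_1$ by appropriate initial lexicographic segments of the correct sizes; one checks that the lexicographic structure on $\binom{[n]}{g}$ decomposes exactly as "first the sets avoiding $n$ (an initial segment of $\binom{[n-1]}{g}$), then the sets containing $n$ (an initial segment of $\binom{[n-1]}{g-1}$, shifted up)", so that reassembling the compressed pieces yields precisely $\mathcal L(a,|\mathcal A|)$ and $\mathcal L(b,|\mathcal B|)$, and the cross-intersecting relations among the pieces are exactly what is needed for the reassembled families to be cross-intersecting.

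The main obstacle, and the step I would spend the most care on, is the bookkeeping in this reassembly: after compressing the four pieces independently, one must verify that an initial segment of $\binom{[n-1]}{a}$ of size $|\mathcal A_0|$ together with an initial segment of $\binom{[n-1]}{a-1}$ of size $|\mathcal A_1|$ (sitting above it) really is an initial lexicographic segment of $\binom{[n]}{a}$ — this uses the identity that the lexicographically first $\binom{n-1}{a}$ sets of $\binom{[n]}{a}$ are exactly those not containing $n$ — and, crucially, that the three cross-intersecting conditions $(\mathcal A_0,\mathcal B_0)$, $(\mathcal A_1,\mathcal B_0)$, $(\mathcal A_0,\mathcal B_1)$ among the compressed segments still hold after compression. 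For the latter one needs the monotonicity statement that making one family a lexicographic initial segment only "improves" cross-intersection with a fixed family, together with the fact that the induction hypothesis applied to a pair makes *both* simultaneously into initial segments; one orders the applications of the induction hypothesis so that each compression step is legitimate, e.g. compress $\mathcal B_0$ first against all of $\mathcal A_0\cup\mathcal A_1$, then compress $\mathcal A_0$ and $\mathcal A_1$ against the compressed $\mathcal B_0$, and finally handle $\mathcal B_1$ against the compressed $\mathcal A_0$. Once all four conditions are checked, the cross-intersecting property of the full reassembled families follows by splitting an arbitrary pair $(A,B)$ according to whether each contains $n$, and the theorem is proved.
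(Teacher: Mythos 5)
The paper does not actually prove Theorem~\ref{thkk}: it is quoted as a classical result of Kruskal and Katona (the cross-intersecting formulation is essentially Hilton's), so there is no in-paper argument to compare against. Judged on its own, your sketch has a genuine gap at its central step. The identity you lean on for the reassembly --- ``the lexicographically first $\binom{n-1}{a}$ sets of $\binom{[n]}{a}$ are exactly those not containing $n$'' --- is false. For $n=4$, $a=2$ the lex order is $\{1,2\},\{1,3\},\{1,4\},\{2,3\},\{2,4\},\{3,4\}$: the first three sets include $\{1,4\}$ and omit $\{2,3\}$. The decomposition ``first the sets avoiding the last element, then the sets containing it'' is the recursive structure of the \emph{colexicographic} order; for the lexicographic order defining $\mathcal L(a,m)$ the correct recursion is on the element $1$ (every set containing $1$ precedes every set not containing $1$, so the first $\binom{n-1}{a-1}$ sets are exactly those through $1$). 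Hence splitting by containment of $n$ cannot reassemble into $\mathcal L(a,|\mathcal A|)$, and the induction as stated does not even produce the right object.

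Suppose you repair this by decomposing at the element $1$ instead: write $\mathcal A_1:=\{A\setminus\{1\}: 1\in A\in\mathcal A\}$, $\mathcal A_0:=\{A\in\mathcal A:1\notin A\}$, and similarly for $\mathcal B$; the three cross-intersecting relations you list then hold for free, with no appeal to shiftedness. The induction still does not close, because $\{\{1\}\cup S: S\in\mathcal L(a-1,|\mathcal A_1|)\}\cup\mathcal L(a,|\mathcal A_0|)$ (segments taken in $[2,n]$) is an initial lex segment of $\binom{[n]}{a}$ only when $\mathcal A_1$ is all of $\binom{[2,n]}{a-1}$ or $\mathcal A_0=\varnothing$. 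To drive the profile to one of these extremes you must trade sets between the two levels, and controlling that trade is precisely the content of the Kruskal--Katona theorem: shiftedness gives the containment $\partial\mathcal A_0\subseteq\mathcal A_1$ (for $A\in\mathcal A_0$ and $j\in A$ one has $(A\setminus\{j\})\cup\{1\}\in\mathcal A$), and one then needs a quantitative shadow estimate to exploit it. Your sketch files this under ``bookkeeping,'' but it is where the whole difficulty of the theorem lives, so as written the proposal is incomplete rather than a proof.
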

We remark that it implies immediately that for any such cross-intersecting families either $|\aaa|\le {n-1\choose a-1}$, or $|\bb|\le {n-1\choose b-1}$.

The Kruskal--Katona theorem is ubiquitous in studying cross-intersecting families. One of the types of problems concerning cross-intersecting families is to bound the sum of the cardinalities of the two families. In this paper we give a proof for the following general theorem of this type.

\begin{thm}\label{lem2} Let $n \ge a+b$, and suppose that families $\mathcal F\subset{[n]\choose a},\mathcal G\subset{[n]\choose b}$ are cross-intersecting.  Fix an integer $j\ge 1$. Then the following inequality holds in three different assumptions, listed below:
\begin{equation}\label{eqlem1}|\mathcal F|+|\mathcal G|\le {n\choose b}+{n-j\choose a-j}-{n-j\choose b}.\end{equation}
1. If $a< b$ and we have $|\mathcal F|\ge {n-j\choose a-j}$;\\
2. If $a\ge b$ and  we have ${n-j\choose a-j}\le |\mathcal F|\le {n+b-1-a\choose b-1}+{n+b-2-a\choose b-1}$;\\ 3. If $a\ge b$ and  we have ${n-a+b-3\choose b-3}+{n-a+b-4\choose b-3}\le |\mathcal F|\le {n-j\choose a-j}$.
\end{thm}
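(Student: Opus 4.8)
The plan is to reduce the problem to the lexicographic families via the Kruskal--Katona theorem (Theorem \ref{thkk}), so that we only need to verify the inequality when $\mathcal F = \mathcal L(a,m)$ and $\mathcal G = \mathcal L(b,m')$ with $\mathcal L(a,m),\mathcal L(b,m')$ cross-intersecting and $m$ as large as the hypotheses allow. Once we are in the lexicographic setting, for each fixed $|\mathcal F|=m$ we may take $\mathcal G$ to be the \emph{largest} family cross-intersecting with $\mathcal L(a,m)$; this is again an initial segment, say $\mathcal L(b,g(m))$, and the whole problem becomes a one-variable inequality $m + g(m) \le \binom{n}{b}+\binom{n-j}{a-j}-\binom{n-j}{b}$ on the relevant range of $m$. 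The key structural observation will be that $g(m)$ is determined by the "last" set $F$ of $\mathcal L(a,m)$: if $F$ is the $m$-th set in lex order on $\binom{[n]}{a}$, then a $b$-set $B$ is disjoint from all of $\mathcal L(a,m)$ iff it is disjoint from $F$ together with all lex-earlier $a$-sets, which one translates into $g(m)$ being a sum of binomial coefficients read off from the cascade (Macaulay) representation of $m$.

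The main step is then a convexity/case analysis on $m$. I would write $m$ in the form governed by $|F\cap[r]|$ for the appropriate prefix length $r$, exactly as in the analysis around Lemma \ref{lembp}: the three regimes in the statement (namely $a<b$; $a\ge b$ with $|\mathcal F|$ in the stated upper window; $a\ge b$ with $|\mathcal F|$ in the stated lower window) correspond to three ranges in which the function $m\mapsto m+g(m)$ behaves monotonically or is bounded by a line joining two explicit endpoints. At the endpoints the inequality should hold with equality (or near-equality): one endpoint is $|\mathcal F| = \binom{n-j}{a-j}$, for which the extremal configuration is $\mathcal F=\{A:[j]\subset A\}$ shifted appropriately and $\mathcal G = \{B: B\cap[j]\ne\varnothing\}$, giving $|\mathcal F|+|\mathcal G| = \binom{n-j}{a-j} + \binom{n}{b}-\binom{n-j}{b}$, exactly the right-hand side of \eqref{eqlem1}; the other endpoints are the boundary values $\binom{n+b-1-a}{b-1}+\binom{n+b-2-a}{b-1}$ and $\binom{n-a+b-3}{b-3}+\binom{n-a+b-4}{b-3}$ appearing in cases 2 and 3. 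So the proof amounts to: (i) check the inequality at these endpoints by direct binomial identities, and (ii) show $m\mapsto m+g(m)$ is convex, or piecewise of the right monotonicity, on each range, so that the endpoint bounds propagate to the whole interval.

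I expect step (ii), the convexity of $m + g(m)$ and the bookkeeping of the cascade representation, to be the main obstacle. The function $g(m)$ is only piecewise "nice": each time the prefix pattern of $F$ changes, a different binomial coefficient enters, and one has to argue that the increments $g(m+1)-g(m)$ are monotone in the right direction on the relevant block. The reason the three hypotheses are stated as ranges of $|\mathcal F|$ rather than a single clean interval is precisely that $m+g(m)$ is not globally convex; there is an intermediate regime (roughly between the upper window of case 2 and the lower window of case 3, and symmetrically for $a<b$) where the bound can fail, so the hypotheses are chosen to stay on the "good" side of the break points. Concretely, the plan is to fix the prefix length $r$ with $\binom{n-r}{a-\lceil r/2\rceil}$-type blocks, show within each block that the slope of $g$ is at least $-1$ (so $m+g(m)$ is nondecreasing) in case 1 and in the lower part of case 3, and at most something making $m+g(m)$ bounded by the chord in case 2, and then glue the blocks. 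The endpoint identities in (i) are routine manipulations of Pascal's rule; the only real work is making the block-by-block slope estimate uniform over the stated ranges, and I would carry that out by comparing $g$ to the explicit extremal example above and invoking Theorem \ref{thkk} once more internally if a cleaner comparison family is needed.
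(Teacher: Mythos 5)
Your reduction via Theorem \ref{thkk} to lexicographic families, and the reformulation as a one-variable problem for $m+g(m)$ with the extremum at $m=\binom{n-j}{a-j}$, is exactly the right frame; your observation that the three ranges exist because the bound fails in an intermediate regime is also correct. But the proposal leaves unproved precisely the step that carries all the difficulty: the slope estimate for $g$. The paper does not establish convexity or compare to chords at all (and convexity of $m+g(m)$ is doubtful, since the increments of $g$ oscillate as the prefix pattern of the last set changes); it proves the needed monotonicity directly by an exchange. Concretely, with $F$ the lex-last set of $\mathcal F=\mathcal L(a,m)$, the hypotheses of Point 2 force $F\supset[a-b]$ and $F\cap\{a-b+1,a-b+2\}\ne\emptyset$; taking the \emph{largest} $l$ with $|F\cap[a-b+2l]|=a-b+l$ and $L=[a-b+2l]\setminus F$, the blocks $\mathcal A=\{A:A\cap[a-b+2l]=F\cap[a-b+2l]\}$ and $\mathcal B=\{B:B\cap[a-b+2l]=L\}$ have \emph{equal} size $\binom{n-a+b-2l}{b-l}$, and the disjointness bipartite graph between them is regular, hence has a perfect matching, so any cross-intersecting pair of subfamilies of $\mathcal A,\mathcal B$ has total size at most $|\mathcal B|$. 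This yields $|\mathcal F\setminus\mathcal A|+|\mathcal G\cup\mathcal B|\ge|\mathcal F|+|\mathcal G|$ while keeping $|\mathcal F\setminus\mathcal A|\ge\binom{n-j}{a-j}$, which is the entire content of ``the slope of $g$ is at most $-1$ on each block.'' Without identifying this choice of $l$ and the equality $|\mathcal A|=|\mathcal B|$ (respectively the inequality $|\mathcal A|\ge|\mathcal B|$ in Point 3, where the exchange runs the other way: one adds $\mathcal A$ to $\mathcal F$ and deletes $\mathcal B$ from $\mathcal G$), the cascade bookkeeping you describe does not close.

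A second, smaller problem: your monotonicity directions are inverted in Case 1. There $|\mathcal F|\ge\binom{n-j}{a-j}$ with no upper bound, so the maximum of $m+g(m)$ must sit at the \emph{left} endpoint $m=\binom{n-j}{a-j}$, and you need $m+g(m)$ non-increasing (slope of $g$ at most $-1$), not non-decreasing; ``slope at least $-1$'' is what is needed in Case 3, where $|\mathcal F|$ is bounded above by $\binom{n-j}{a-j}$ and one pushes $m$ up to that value. Finally, your plan to verify the inequality at the upper endpoint of Case 2 ``by direct binomial identities'' understates the work: computing $g$ at $m=\binom{n+b-1-a}{b-1}+\binom{n+b-2-a}{b-1}$ already requires the same structural analysis of the last set of $\mathcal L(a,m)$, and in the paper no endpoint other than $m=\binom{n-j}{a-j}$ is ever evaluated --- the exchange argument makes that unnecessary.
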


It is not difficult to come up with an example of a cross-intersecting pair on which the bound \eqref{eqlem1} is attained: take $\mathcal F :=\{F\in{[n]\choose a}: [j]\subset F\}, \mathcal G:=\{ G\in {[n]\choose b}: G\cap [j]\ne \emptyset\}$. We also give the intuition for the bounds on $|\ff|$ in Points 2 and 3. If $\ff=\mathcal L(a,|\ff|)$, then the family $\ff$ of size attaining the upper bound in Point 2 consists of all  sets containing $[a-b+1]$ and all the sets that contain $[a-b]$ and $\{a-b+2\}$. Similarly, such family $\ff$ attaining the lower bound in Point 3 consists of all sets containing $[a-b+3]$ and all sets containing $[a-b+2]$ and $\{a-b+4\}$.

We give the proof of Theorem~\ref{lem2} in the end of this section.

We note that another characteristic that is well-studied for intersecting families is the product of the cardinalities. We refer the reader to the result due to Pyber \cite{P} and a recent refinement \cite{FK3}. 

Returning to the sum of cardinalities, the following theorem including Point 1 and some cases of Point 2 of Theorem \ref{lem2} was proven in \cite{FT}.

\begin{thrm}[Frankl, Tokushige, \cite{FT}]\label{lemft} Let $n > a+b$, $a\le b$, and suppose that families $\mathcal F\subset{[n]\choose a},\mathcal G\subset{[n]\choose b}$ are cross-intersecting. Suppose that for some real number $\alpha \ge 1$ we have  ${n-\alpha \choose n-a}\le |\mathcal F|\le {n-1\choose n-a}$. Then \begin{equation}\label{eqft}|\mathcal F|+|\mathcal G|\le {n\choose b}+{n-\alpha \choose n-a}-{n-\alpha \choose b}.\end{equation}
\end{thrm}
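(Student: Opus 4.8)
The plan is to deduce this from the Kruskal--Katona theorem (Theorem \ref{thkk}) together with an extremal computation. First I would apply Theorem \ref{thkk} to reduce to the case where $\mathcal F = \mathcal L(a,|\mathcal F|)$ and $\mathcal G = \mathcal L(b,|\mathcal G|)$ are initial segments in the lexicographic order: shifting to these initial segments preserves cross-intersection and does not change the sizes, so it suffices to bound $|\mathcal F|+|\mathcal G|$ in this case. Once both families are lex-initial segments, cross-intersection becomes a purely combinatorial condition that can be read off from the "last" set of $\mathcal F$ and the "last" set of $\mathcal G$: if $F$ is the lex-largest member of $\mathcal F$, then $\mathcal G$ must consist precisely of those $b$-sets meeting every $a$-set that is $\preceq F$, which is again a lex-initial segment determined by $F$.

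Next I would parametrize. Write $|\mathcal F|$ in terms of the "profile" of the lex order: there is a standard cascade-type representation of $|\mathcal F|$, and the constraint ${n-\alpha\choose n-a}\le |\mathcal F|\le{n-1\choose n-a}$ (note ${n-1\choose n-a}={n-1\choose a-1}$, the EKR bound, so $\mathcal F$ cannot be too large) pins the lex-largest set $F$ of $\mathcal F$ into a controlled range. For $\mathcal F$ an initial segment with $F$ as its maximal element, $|\mathcal G|$ is a function $g(F)$ of $F$ alone (the number of $b$-sets hitting all $a$-sets up to $F$), and I want to show $|\mathcal F| + g(F) \le {n\choose b}+{n-\alpha\choose n-a}-{n-\alpha\choose b}$. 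The key structural point is monotonicity/convexity: as $|\mathcal F|$ grows by adding the next lex set, $|\mathcal G|$ can only shrink, and the trade-off rate is controlled. Concretely, when $|\mathcal F|$ equals exactly ${n-\alpha\choose n-a}$ for integer $\alpha$, the extremal configuration is $\mathcal F=\{F: [\alpha-1]\subset F\}$ (all $a$-sets containing a fixed $(\alpha-1)$-set) — wait, more precisely the construction matching the bound is $\mathcal F=\{F:[j]\subset F\}$, $\mathcal G=\{G:G\cap[j]\ne\emptyset\}$ type — and the theorem asserts this is optimal among all lex-initial $\mathcal F$ in the allowed size range, with the real parameter $\alpha$ handling the non-integer interpolation by convexity of $\binom{n-x}{m}$ in $x$.

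The main obstacle, I expect, is handling the real parameter $\alpha\ge 1$ rather than just integer values: for integer $\alpha$ the bound follows from comparing two explicit lex-initial pairs, but to interpolate one needs that the function $|\mathcal F|\mapsto \max|\mathcal G|$ (over cross-intersecting lex pairs) lies below the straight-line/convex envelope joining consecutive "nice" values, and this requires a careful estimate that the Kruskal--Katona "shadow" growth is convex in the right sense. I would handle this by writing both ${n-\alpha\choose n-a}$ and ${n-\alpha\choose b}$ as functions of $\alpha$, checking that $f(\alpha):={n\choose b}+{n-\alpha\choose n-a}-{n-\alpha\choose b}$ minus the true maximum of $|\mathcal F|+|\mathcal G|$ is nonnegative, reducing to the integer endpoints by monotonicity in $\alpha$ of the relevant binomial expressions (valid since $n>a+b$ ensures $n-\alpha > b$ throughout the range, so both binomials are in their monotone regime). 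The remaining verification at integer $\alpha$ is the combinatorial heart: given $\mathcal F$ a lex-initial segment of size ${n-\alpha\choose n-a}$, identify $\mathcal G$ exactly and add.

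\medskip

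\noindent\emph{Remark.} An alternative, and perhaps cleaner, route consistent with the paper's philosophy would be to mimic the regular-bipartite-graph argument of Section~\ref{sec1}: partition ${[n]\choose a}$ and ${[n]\choose b}$ into blocks according to intersection with a suitable prefix $[2l]$ (as in Lemma~\ref{lembp}), observe that cross-intersection within each matched pair of blocks is governed by a regular bipartite graph hence by its perfect matching, and push $\mathcal F,\mathcal G$ towards the extremal configuration block by block without decreasing $|\mathcal F|+|\mathcal G|$. I would try this first, as it avoids the delicate real-$\alpha$ interpolation by building the bound directly; the obstacle there shifts to verifying the relevant auxiliary bipartite graphs are genuinely regular under the size hypothesis on $|\mathcal F|$.
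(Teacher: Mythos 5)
First, a point of reference: the paper does not prove this statement at all --- it is quoted from Frankl and Tokushige \cite{FT}, and the paper's own contribution in this direction (Theorem \ref{lem2}) is a companion result established only for \emph{integer} $j$. So there is no in-paper proof to compare against, and your proposal has to stand on its own.

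Evaluated on its own terms, the proposal is a reasonable outline of the original Frankl--Tokushige strategy (Kruskal--Katona reduction to lexicographic initial segments, then an extremal computation), but it has a genuine gap exactly where you yourself locate the difficulty. The claim that the real-$\alpha$ case reduces ``to the integer endpoints by monotonicity'' fails. Writing $\phi(\alpha)=\binom{n-\alpha}{n-a}$ and $\psi(\alpha)=\binom{n}{b}-\binom{n-\alpha}{b}$, the theorem is equivalent to: for every real $\alpha\ge 1$, $|\mathcal F|\ge\phi(\alpha)$ forces $|\mathcal G|\le\psi(\alpha)$. For $\alpha\in(j,j+1)$ one has $\phi(\alpha)>\phi(j+1)$ and $\psi(\alpha)<\psi(j+1)$, so the integer instances at $j$ and $j+1$ yield only $|\mathcal G|\le\psi(j+1)$, which is strictly \emph{weaker} than the required $|\mathcal G|\le\psi(\alpha)$; the non-integer statement is genuinely stronger than its integer endpoints and requires the careful shadow/convexity estimate that is the actual content of \cite{FT}, which you do not supply. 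The ``combinatorial heart'' --- identifying the maximal $\mathcal G$ compatible with a lex-initial $\mathcal F$ of size $\binom{n-\alpha}{n-a}$ and carrying out the addition --- is likewise left unexecuted, and the parenthetical identification of the extremal family (sets containing $[\alpha-1]$ versus $[\alpha]$) is off by one. Finally, the alternative route via regular bipartite graphs would not ``avoid the delicate real-$\alpha$ interpolation'': that method moves $\mathcal F$ between families indexed by integer prefixes $[t]$, which is precisely why the paper's Theorem \ref{lem2} is restricted to integer $j$ --- a limitation the authors state explicitly when comparing the two theorems. In short, the proposal is a correct sketch of the known strategy, not a proof.
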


On the one side, in Theorem~\ref{lemft} the parameter $\alpha$ may take real values, while in Theorem~\ref{lem2} the parameter $j$ is bound to be integer. On the other side, Theorem~\ref{lem2} deals with the case $a>b$ and, more importantly, even for $a=b$ the restriction on the size of  $\ff$ is weaker. This plays a crucial role in the proof of Frankl's theorem in the next section.

Let us obtain a useful corollary of Theorem~\ref{lemft}, overlooked by Frankl and Tokushige, which allows to extend it to the case $a>b$.

\begin{cor}\label{corft} Let $n > a+b$, $a> b$, and suppose that $\mathcal F\subset{[n]\choose a},\mathcal G\subset{[n]\choose b}$ are cross-intersecting. Suppose that for some real  $\alpha \ge a-b+1$ we have ${n-\alpha \choose n-a}\le |\mathcal F|\le {n-a+b-1\choose n-a}$. Then (\ref{eqft}) holds.
\end{cor}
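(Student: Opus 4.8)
The plan is to reduce the statement, via shifting and the Kruskal--Katona theorem, to the case of lexicographic families, where a direct ``localization'' turns the case $a>b$ into an instance of Theorem~\ref{lemft} itself with a smaller, essentially balanced, pair of parameters.

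First, by Theorem~\ref{thkk} we may replace $\mathcal F,\mathcal G$ by $\mathcal L(a,|\mathcal F|),\mathcal L(b,|\mathcal G|)$, and we may take $\mathcal G$ to be the largest $b$-uniform family cross-intersecting the given $\mathcal F$; this preserves all sizes and the cross-intersecting property. Since $|\mathcal F|\le\binom{n-a+b-1}{n-a}=\binom{n-a+b-1}{b-1}$ and one checks that $\mathcal L\bigl(a,\binom{n-a+b-1}{b-1}\bigr)=\{F\in\binom{[n]}{a}:[a-b+1]\subseteq F\}$, every member of $\mathcal F$ contains $[a-b+1]$.

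Next, localize. Because every $F\in\mathcal F$ contains $[a-b+1]$, every $b$-set meeting $[a-b+1]$ meets every $F$; hence the maximal $\mathcal G$ consists of all $\binom{n}{b}-\binom{N}{b}$ such $b$-sets, where $N:=n-a+b-1$, together with a maximal family $\mathcal G^{\ast}\subseteq\binom{[N]}{b}$ cross-intersecting $\mathcal F^{\ast}:=\{F\setminus[a-b+1]:F\in\mathcal F\}\subseteq\binom{[N]}{b-1}$ (identifying $\{a-b+2,\dots,n\}$ with $[N]$). Thus $|\mathcal F|+|\mathcal G|=\binom{n}{b}-\binom{N}{b}+\bigl(|\mathcal F^{\ast}|+|\mathcal G^{\ast}|\bigr)$; writing $\beta:=\alpha-(a-b+1)\ge 0$ and using $n-a=N-(b-1)$ and $n-\alpha=N-\beta$, the target inequality \eqref{eqft} becomes precisely $|\mathcal F^{\ast}|+|\mathcal G^{\ast}|\le\binom{N}{b}+\binom{N-\beta}{N-(b-1)}-\binom{N-\beta}{b}$, which is exactly the conclusion of Theorem~\ref{lemft} applied on the ground set $[N]$ to the cross-intersecting pair $\mathcal F^{\ast}\subseteq\binom{[N]}{b-1}$, $\mathcal G^{\ast}\subseteq\binom{[N]}{b}$ with parameter $\beta$. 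This is admissible ($b-1\le b$, $N>2b-1$ since $n>a+b$, and $|\mathcal F^{\ast}|=|\mathcal F|\ge\binom{n-\alpha}{n-a}=\binom{N-\beta}{N-(b-1)}$) as soon as $\beta\ge 1$ and $|\mathcal F^{\ast}|\le\binom{N-1}{N-(b-1)}$, i.e.\ $|\mathcal F|\le\binom{n-a+b-2}{n-a}$.

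It remains to cover the top piece $\binom{n-a+b-2}{n-a}<|\mathcal F|\le\binom{n-a+b-1}{n-a}$ (which also subsumes the case $\beta<1$, since then $|\mathcal F|\ge\binom{N-\beta}{N-(b-1)}>\binom{N-1}{N-(b-1)}$). In that range $\mathcal F^{\ast}$ contains a complete star $\{A\in\binom{[N]}{b-1}:z\in A\}$, and since $N>2b-1$ this forces every member of the maximal $\mathcal G^{\ast}$ to contain $z$; deleting $z$ turns $(\mathcal F^{\ast},\mathcal G^{\ast})$ into a cross-intersecting pair of $(b-1)$-uniform families on $N-1$ points --- again the situation of Theorem~\ref{lemft}, now with equal set sizes --- and iterating this peeling a bounded number of times reduces to the range already treated, or to the trivial case where $\mathcal F^{\ast}$ is the complete family and $\mathcal G^{\ast}=\varnothing$, in which \eqref{eqft} holds with equality. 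The main obstacle is this last step: one must verify that the bounds produced along the iteration telescope to exactly the stated right-hand side, i.e.\ carry out the Pascal-rule bookkeeping with (generalized) binomial coefficients --- for instance the identity $\binom{N-1}{N-(b-1)}-\binom{N-1}{b}=\binom{N}{N-(b-1)}-\binom{N}{b}$ that matches up the boundary values, together with the analogous monotonicity in the real parameter $\beta\in[0,1]$. These estimates are elementary but delicate, as they compare binomial coefficients with rather different lower indices over a short interval.
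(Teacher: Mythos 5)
Your reduction starts the same way as the paper's (Kruskal--Katona, then observing that $|\mathcal F|\le\binom{n-a+b-1}{n-a}$ forces $[a-b+1]\subseteq F$ for all $F\in\mathcal F$, then localizing onto the complement of the common prefix), but your choice of offset creates a genuine gap. By deleting all $a-b+1$ common elements you land on a pair with uniformities $(b-1,b)$ on $N=n-a+b-1$ points and parameter $\beta=\alpha-(a-b+1)\ge 0$, and Theorem~\ref{lemft} then only applies when $\beta\ge 1$ and $|\mathcal F^{\ast}|\le\binom{N-1}{N-(b-1)}$. This excludes exactly the range $\binom{n-a+b-2}{n-a}<|\mathcal F|\le\binom{n-a+b-1}{n-a}$ (and with it the whole case $a-b+1\le\alpha<a-b+2$), i.e.\ the regime where $\mathcal F$ is between a star and the full family after localization. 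For that regime you only sketch an iterative ``peeling'' and explicitly leave unverified the telescoping of the accumulated bounds against a right-hand side involving a generalized binomial coefficient $\binom{N-\beta}{\cdot}$ with $\beta\in[0,1)$; this is not routine bookkeeping --- after one peel the non-star part of $\mathcal F^{\ast}$ can again exceed the admissible range, so the iteration may run many steps and what you need at each step is essentially the corollary itself at a smaller scale. As it stands, the statement is not proved on a substantial part of its hypothesis range.

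The fix is to shift the cut by one element, which is what the paper does: delete only $[a-b]$, so that $\mathcal F':=\{F\setminus[a-b]:F\in\mathcal F\}$ and $\mathcal G':=\{G\in\mathcal G: G\subseteq[a-b+1,n]\}$ form a cross-intersecting pair of $b$-uniform families on $n'=n-a+b$ points. Then $\alpha':=\alpha-a+b\ge 1$ and the hypothesis $|\mathcal F|\le\binom{n-a+b-1}{n-a}=\binom{n'-1}{n'-b}$ is exactly the upper constraint of Theorem~\ref{lemft}, so the theorem applies directly with no case split; adding the trivial bound $|\mathcal G\setminus\mathcal G'|\le\binom{n}{b}-\binom{n-a+b}{b}$ yields \eqref{eqft} at once. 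In short: your localization is off by one, and the unproved top piece is precisely the price of that choice.
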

\begin{proof} Applying Theorem \ref{thkk}, we may suppose that $\mathcal F = \mathcal L(a,|\mathcal F|), \mathcal G=\mathcal L(b,|\mathcal G|)$.
Because of the restriction on the size of $\mathcal F$ we have $[a-b+1]\subset F$ for any  $F\in \mathcal F$. Consider two families $\mathcal F':=\{F'\in {[a-b+1,n]\choose b}: F'\cup [a-b]\in \mathcal F\}$, \- $\mathcal G':=\{G'\in {[a-b+1,n]\choose b}: G\in \mathcal G\}$. We have $|\mathcal F'| = |\mathcal F|$, and we may apply Theorem~\ref{lemft} for $\mathcal F', \mathcal G'$ with $n':= n-a+b, \alpha':=\alpha-a+b$. We get that $|\mathcal F|+|\mathcal G'|\le {n-a+b\choose b}+{n-\alpha \choose n-a}-{n-\alpha \choose b}$. We have $|\mathcal G\setminus\mathcal G'|\le {n\choose b}-{n-a+b\choose b}$, and summing this inequality with  the previous one we get the statement of the corollary.
\end{proof}
\vskip+0.1cm

\begin{proof}[Proof of Theorem~\ref{lem2}] The proof of Point 1 of the theorem is a simplified version of the proof of Point 2, and thus we first give the proof of Point 2, and then specify the parts which are different in the proof of Point 1.

 The proof of the theorem uses bipartite graph considerations similar to the ones embodied in Lemma \ref{lembp}. We give two slightly different versions of it in the proof of Point 2 and Point 3.\\

\textbf{Point 2.} Let $\mathcal F,\mathcal G$ be a cross-intersecting pair with {\it minimal} cardinality of $\ff$ among the pairs satisfying the conditions of Point 2 and having maximal possible sum of cardinalities.

Due to Theorem~\ref{thkk} we may suppose that $\mathcal F = \mathcal L(a,|\mathcal F|), \mathcal G=\mathcal L(b,|\mathcal G|)$. Let $F\in \mathcal F$ be the set with the largest order in $\ff$. The bound on the cardinality of $|\mathcal F|$ implies $F\supset \{1,\ldots a-b\}$ and $F\cap \{a-b+1,a-b+2\}\ne \emptyset$. Take the largest $l\ge 1$, for which $|F\cap [a-b+2l]|= a-b+l$ is satisfied. Put
$L:= [a-b+2l]\setminus F$. Consider the families $\mathcal{A}:=\{A\in\binom{[n]}{a}:A\cap [a-b+2l]= F\cap [a-b+2l]\}$ and $\mathcal{B}:=\{B\in\binom{[n]}{b}:B\cap [a-b+2l]=L\}$.

The considerations in this and the next paragraph follow closely the argument from Lemma \ref{lembp}. For any $B\in \mathcal{B},\ F'\in \mathcal{F}\setminus \mathcal{A}$ we have $ F'\cap B\not = \emptyset$. Indeed, take $F''\in \mathcal{F}, F''\cap L = \emptyset$. Since $F$ has the largest order in $\mathcal F$, we have $F''\cap [a-b+2l]\subset F$, and, consequently, $F\in \mathcal{A}$.
This implies that the pair $\mathcal{F}\setminus \mathcal{A}, \mathcal G\cup \mathcal{B}$ is cross-intersecting.

Next, $|\mathcal{A}|=\binom{n-a+b-2l}{b-l}=|\mathcal{B}|$. Consider a bipartite graph with parts $\mathcal A,\mathcal B$ and edges connecting disjoint sets. Independent sets in this graph correspond to the cross-intersecting pairs of subfamilies of $\mathcal A$ and $ \mathcal B$. This is a regular graph, and, consequently, the maximal independent set has the size  $|\mathcal{B}|$. Therefore $|\mathcal{F}\setminus \mathcal{A}|+|\mathcal G\cup \mathcal{B}|\ge |\mathcal{F}|+|\mathcal G|$.

Finally, if $|\mathcal F|>{n-j\choose a-j},$ then $F\nsupseteq [j]$, and thus all the sets containing $[j]$ belong to $\ff\setminus \aaa$. Therefore, ${n-j\choose a-j}\le |\mathcal{F}\setminus \mathcal{A}|<|\mathcal F|$,  and we obtain a contradiction with the minimality of  $\mathcal  F$.\\

\textbf{Point 1.} Let $\mathcal F,\mathcal G$ be a cross-intersecting pair with {\it minimal} cardinality of $\ff$ among the pairs satisfying the conditions of Point 2 and having maximal possible sum of cardinalities.

 The bound on the cardinality of $|\mathcal F|$ implies $F\supset [1,j]$. Take the largest $l\ge 1$, for which $|F\cap [2l]|= l$ is satisfied. Put
$L:= [2l]\setminus F$. Consider the families $\mathcal{A}:=\{A\in\binom{[n]}{a}:A\cap [2l]= F\cap [2l]\}$ and $\mathcal{B}:=\{B\in\binom{[n]}{b}:B\cap [2l]=L\}$. The rest of the proof is the same.
\\

\textbf{Point 3.} Let $\mathcal F,\mathcal G$ be a cross-intersecting pair with \textit{maximal} cardinality of $\ff$ among the pairs of maximal sum of cardinalities satisfying the conditions of Point 3. We again w.l.o.g. suppose that $\mathcal F = \mathcal L(a,|\mathcal F|), \mathcal G=\mathcal L(b,|\mathcal G|)$.

Let $F\in {[n]\choose a}\setminus \mathcal F$ have the smallest order. Due to the lower bound on  $|\mathcal F|$, one of the following conditions hold: either $F\cap [a-b+4]=[a-b+2]$ or for some integer $1\le i\le a-b+1$ one has $F\cap [i+1] = [i]$. Indeed, if the latter condition does not hold, then $F\supset [a-b+2]$. But all the sets containing $[a-b+2]$ and at least one of $\{a-b+3,a-b+4\}$ are in $\ff$, and so $F\cap [a-b+4]=[a-b+2]$. Denote by $t$ the expression $a-b+4$ in the former case, and $i+1$ in the latter case. Put $L:=[t]\setminus F$ and consider two families: $\mathcal{A}:=\{A\in\binom{[n]}{a}:A\cap [t]= F\cap [t]\}$ and $\mathcal{B}:=\{B\in\binom{[n]}{b}:B\cap [t]=L\}$. As in the previous point, the pair  $\mathcal{F}\cup\mathcal{A}, \mathcal G\setminus \mathcal{B}$ is cross-intersecting. Moreover, it is easy to see that $a-|F\cap [t]|\ge b-|L|$ and $n-t\ge a-|F\cap [t]|+b-|L|=a+b-t$. Consequently, $|\mathcal A| = {n-t\choose a-|F\cap [t]|}\ge {n-t\choose b-|L|} = |\mathcal B|$. Therefore, arguing as in the previous point, we conclude that  $|\mathcal F| = {n-j\choose a-j}$.
\end{proof}

\section{The diversity of intersecting families}\label{sec3}
For a family $\ff$ the \textit{diversity} $\gamma(\ff)$ is the quantity $|\ff|-\Delta(\ff)$, where $\Delta(\ff):=\max_{i\in[n]}\big|\{F:i\in F\in \ff\}\big|$. This notion was studied, in particular, in \cite{LP}. In this section we discuss the connection between the diversity and the size of an intersecting family. The following theorem was de-facto proved by Frankl \cite{Fra1} for \textit{integer} $u$ (with the difference that Frankl uses $\Delta(\ff)$ instead of $\gamma(\ff)$ to bound the cardinality of the family):

\begin{thm}\label{thm1} Let $n>2k>0$ and $\ff\subset {[n]\choose k}$ be an intersecting family. Then, if $\gamma(\ff)\ge {n-u-1\choose n-k-1}$ for some real $3\le u\le k$, then \begin{equation}\label{eq01}|\ff|\le {n-1\choose k-1}+{n-u-1\choose n-k-1}-{n-u-1\choose k-1}.\end{equation}
\end{thm}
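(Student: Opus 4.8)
The plan is to reduce to a shifted family and then convert the diversity condition into a statement about a cross-intersecting pair, so that Theorem \ref{lem2} (more precisely, its Point 2 / the Frankl--Tokushige regime with $a=b=k$) can be applied. First I would argue that we may assume $\ff$ is shifted: shifting preserves the property of being intersecting, does not change $|\ff|$, and one checks that it does not decrease the diversity $\gamma(\ff)$ (shifting can only concentrate sets on element $1$, hence $\Delta(\ff)$ after shifting is realized at $1$, and the standard monotonicity argument shows $\gamma$ does not go down). So assume $\ff$ is shifted; then $\Delta(\ff)=|\{F\in\ff: 1\in F\}|$. Write $\ff_1:=\{F\in\ff:1\in F\}$ and $\ff_{\bar 1}:=\{F\in\ff:1\notin F\}$, so $\gamma(\ff)=|\ff_{\bar1}|$.

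Next, the key structural point: because $\ff$ is intersecting, for every $F\in\ff_{\bar1}$ and every $G\in\ff_1$ we have $F\cap G\neq\varnothing$; removing the element $1$ from the sets of $\ff_1$, set $\mathcal G:=\{G\setminus\{1\}: G\in\ff_1\}\subset\binom{[2,n]}{k-1}$ and $\mathcal F:=\ff_{\bar1}\subset\binom{[2,n]}{k}$. Since $1\notin F$ for $F\in\mathcal F$, intersecting-ness of $\ff$ forces $\mathcal F,\mathcal G$ to be cross-intersecting on the ground set $[2,n]$ of size $n-1$. Thus $|\ff|=|\mathcal F|+|\mathcal G|$ with $\mathcal F\subset\binom{[n-1]}{k}$, $\mathcal G\subset\binom{[n-1]}{k-1}$ cross-intersecting (after renaming $[2,n]$ to $[n-1]$), and the hypothesis $\gamma(\ff)\ge\binom{n-u-1}{n-k-1}$ becomes $|\mathcal F|\ge\binom{n-u-1}{n-k-1}=\binom{(n-1)-u}{(n-1)-k}$. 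The roles are slightly asymmetric: here the "large" lower-bounded family $\mathcal F$ has the larger uniformity $k>k-1$, which matches Point 3's regime ($a\ge b$ with a lower bound on $|\mathcal F|$) rather than Point 1; I would therefore apply Theorem \ref{lem2} Point 3 (or, for real $u$, Corollary \ref{corft} with $a=k$, $b=k-1$, $\alpha=u$), whose conclusion $|\mathcal F|+|\mathcal G|\le\binom{n-1}{k-1}+\binom{(n-1)-u}{(n-1)-k}-\binom{(n-1)-u}{k-1}$ is exactly the desired inequality \eqref{eq01} once one substitutes back $\binom{(n-1)-u}{(n-1)-k}=\binom{n-u-1}{n-k-1}$ and $\binom{n-1}{k-1}+\cdots-\binom{n-u-1}{k-1}$.

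The main obstacle is verifying that the size of $\mathcal F$ indeed falls in the admissible window of the cross-intersecting theorem being invoked, and choosing the right one: the hypothesis only gives the lower bound $|\mathcal F|\ge\binom{n-u-1}{n-k-1}$, so I need either (i) the upper end of the window to be automatic from $\mathcal F\subset\binom{[n-1]}{k}$ being part of an intersecting family (so $|\mathcal F|$ cannot be too large, e.g. it is at most $\binom{n-2}{k-1}$-type bound since $\mathcal F$ together with the full star is intersecting), or (ii) to split into the case where $|\mathcal F|$ is large — in which range one shows $\mathcal F$ must be (a shift of) a near-star and the bound follows directly or by Hilton--Milner — and the case where $|\mathcal F|$ lies in the window of Corollary \ref{corft}/Theorem \ref{lem2}. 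I expect the clean route is: if $|\mathcal F|\le\binom{n-k-1}{k-1}+\binom{n-k-2}{k-1}$-ish, apply Point 3 with $a=k,b=k-1$; otherwise $\ff_{\bar1}$ is so large that $\gamma(\ff)$ exceeds what any intersecting family allows unless $\ff$ is basically the Hilton--Milner example, handled separately. Also one must double-check the hypothesis $3\le u\le k$ translates to the parameter constraints of the invoked theorem (this is where the lower cutoff $u\ge 3$, i.e. the $\binom{n-a+b-3}{b-3}$-type bound in Point 3, comes from), and that $n>2k$ gives $n-1\ge k+(k-1)$ as required there.
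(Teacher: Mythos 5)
Your reduction to a cross-intersecting pair $\mathcal F=\ff(\bar 1)\subset\binom{[2,n]}{k}$, $\mathcal G=\ff(1)\subset\binom{[2,n]}{k-1}$ with $|\mathcal F|=\gamma(\ff)$ is exactly the paper's starting point (and note it needs no shifting at all --- only that $1$ is the most popular element). But there are two genuine problems. First, the claim that shifting ``does not decrease the diversity'' is false, and in fact backwards: shifting concentrates sets on the element $1$, which \emph{increases} $\Delta(\ff)$ and hence \emph{decreases} $\gamma(\ff)=|\ff|-\Delta(\ff)$, potentially destroying the hypothesis $\gamma(\ff)\ge\binom{n-u-1}{n-k-1}$. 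Controlling this loss is precisely the content of the paper's Lemma~\ref{lemdiv}, which shows a single shift costs at most $\binom{n-3}{k-2}$ in diversity; you cannot simply assume $\ff$ is shifted. Second, you invoke Point~3 of Theorem~\ref{lem2}, but its window is $|\mathcal F|\le\binom{n-j}{a-j}$, i.e.\ on the wrong side of the threshold: your hypothesis is $|\mathcal F|\ge\binom{(n-1)-u}{(n-1)-k}$, which is the regime of Point~2 (or of Corollary~\ref{corft} for real $u$). Those results carry an \emph{upper} bound on $|\mathcal F|$, roughly $\binom{n-3}{k-2}+\binom{n-4}{k-2}$ after substituting $a=k$, $b=k-1$, $n'=n-1$, and that upper bound is not automatic.

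This brings out the third and most serious gap: your fallback for when $\gamma(\ff)$ exceeds the admissible window is that $\ff$ must be ``basically the Hilton--Milner example.'' That is the opposite of the truth --- the Hilton--Milner family has diversity $1$, whereas for $2k<n<3k$ the diversity of an intersecting family can genuinely exceed $\binom{n-3}{k-2}+\binom{n-4}{k-2}$ (consider $\{F:|F\cap[2i+1]|\ge i+1\}$). The large-diversity regime is the hard case and requires a separate argument: the paper shifts step by step using Lemma~\ref{lemdiv}, so that either the diversity drops into the window of Point~2 (without overshooting below $\binom{n-4}{k-3}$), or the family becomes shifted while still having large diversity, in which case Lemma~\ref{lembp} produces an equally large intersecting family of strictly smaller but still large diversity, contradicting a minimality assumption. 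None of this is present in your plan, so as written the proof covers only the paper's Cases~1 and~2.
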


The bound from Theorem \ref{thm1} is sharp for integer $u$, as witnessed by the example  $\mathcal A:=\{A\in {[n]\choose k}:[2,u]\subset A\}\cup\{A\in{[n]\choose k}: 1\in A, [2,u]\cap A\ne \emptyset\}.$ We note that the case $u = k$ of Theorem~\ref{thm1} is precisely the Hilton--Milner theorem.

Below we give a new proof of Theorem~\ref{thm1}, which is much simpler than the proof of the integral case in \cite{Fra1}.  The proof makes use of the Kruskal--Katona theorem and the methods developed in the previous sections.

For any disjoint $I,J\subset [n]$ define $$\ff(\bar I J):=\{F\setminus J: F\in \ff, J\subset F, F\cap I=\varnothing\} \subset {[n]\setminus (I\cup J)\choose k-|J|}.$$

\begin{proof}[Proof of Theorem~\ref{thm1}] Let $\mathcal F$ have maximal cardinality among the families satisfying the condition of the theorem.\\

  \textbf{Case 1:} $\pmb{\gamma(\ff)\le {n-4\choose k-3}}$. Let $1$ be the most popular element in $\ff$. Consider the families $\ff(1)$ and $\ff(\bar 1)$. It is clear that $|\ff| = |\ff(1)|+|\ff(\bar 1)|$.  These two families are cross-intersecting and, moreover, we have  ${n-u-1\choose n-k-1}\le |\ff(\bar 1)|\le {n-4\choose n-k-1}$. Using Corollary~\ref{corft} for $n':=n-1, a := k, b:=k-1,$ we get the statement of the theorem.\\

\textbf{Case 2:} $ \pmb{{n-4\choose k-3}<\gamma(\ff)\le {n-3\choose k-2}+{n-4\choose k-2}}$. This case is treated analogously, with the only difference that instead of Corollary~\ref{corft} we use Point 2 of Theorem~\ref{lem2} with $n':=n-1, a := k, b:=k-1, j=3$.\\

\textbf{Case 3:}  $\pmb{\gamma(\ff)> {n-3\choose k-2}+{n-4\choose k-2}}$. When the diversity is large, we cannot proceed as in Part 2 of Theorem~\ref{lem2}, since we cannot guarantee that, after passing to the lexicographically minimal elements, the set  $L$ of maximal order will satisfy the necessary condition $|L\cap [2l]|=l$ for some $l\ge 1$. (We cannot neither apply nor imitate the proof of Part 2 of Theorem~\ref{lem2} in this case.) We will proceed differently. First we show that we may assume that  $\ff$ is shifted.

\begin{lem}\label{lemdiv} For any intersecting family $\ff\subset{[n]\choose k}$ and  any $i,j$, satisfying $|\ff(i)|\ge |\ff(j)|$, we have $\gamma(\ff)-\gamma(S_{i,j}(\ff))\le {n-3\choose k-2}$.
\end{lem}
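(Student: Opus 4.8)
The plan is to track how the quantities $|\ff(i)|$, $|\ff(j)|$, and the full degree sequence change under the $(i,j)$-shift, and to bound the change in diversity in terms of the number of sets that actually move. First I would split $\ff$ into three parts according to the behaviour of a set $A$ under $S_{i,j}$: the sets that contain $i$ or miss $j$ (these are fixed), the sets $A$ with $j\in A$, $i\notin A$ such that $S_{i,j}(A)\notin \ff$ (these genuinely move, replacing $j$ by $i$), and the sets $A$ with $j\in A$, $i\notin A$ such that $S_{i,j}(A)\in\ff$ as well (these, together with their shifts, stay). Denote by $\mathcal D$ the family of moved sets, i.e.\ those $A\in\ff$ with $j\in A$, $i\notin A$, $S_{i,j}(A)\notin\ff$; all of them lie in $\ff(\bar i j)$ after deleting $j$, so $|\mathcal D|\le|\ff(\bar i j)|$. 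After the shift, exactly the sets of $\mathcal D$ leave degree class $j$ and enter degree class $i$; the degrees of all other elements $t\ne i,j$ are unchanged, and the degrees of $i$ and $j$ change by $+|\mathcal D|$ and $-|\mathcal D|$ respectively.

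Next I would identify the most popular element. Since $|\ff(i)|\ge|\ff(j)|$ and $S_{i,j}$ moves mass from $j$ to $i$, the maximum degree can only increase, and in fact $\Delta(S_{i,j}(\ff))$ is either $\Delta(\ff)$ (if some element $t\ne i$ remains maximal) or $|\ff(i)|+|\mathcal D| = |S_{i,j}(\ff)(i)|$. Consequently $\Delta(S_{i,j}(\ff))\ge \Delta(\ff)$, hence
\[
\gamma(\ff)-\gamma(S_{i,j}(\ff)) = \bigl(|\ff|-\Delta(\ff)\bigr)-\bigl(|S_{i,j}(\ff)|-\Delta(S_{i,j}(\ff))\bigr) = \Delta(S_{i,j}(\ff))-\Delta(\ff)\le |\mathcal D|\le |\ff(\bar i j)|,
\]
using that shifting preserves cardinality. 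So it suffices to bound $|\ff(\bar i j)|$ by $\binom{n-3}{k-2}$. The family $\ff(\bar i j)$ lives in $\binom{[n]\setminus\{i,j\}}{k-1}$, which has $\binom{n-2}{k-1}$ sets in total, so this is not yet immediate; the key point to exploit is that $\ff$ is intersecting. The sets in $\ff(\bar i j)$ come from sets of $\ff$ that contain $j$ but not $i$; the sets in $\ff(i\bar j)$ come from sets containing $i$ but not $j$. For $A\in\ff$ with $j\in A$, $i\notin A$ and $B\in\ff$ with $i\in B$, $j\notin B$, intersecting forces $(A\setminus\{j\})\cap(B\setminus\{i\})\ne\varnothing$, i.e.\ $\ff(\bar i j)$ and $\ff(i\bar j)$ are cross-intersecting in $\binom{[n]\setminus\{i,j\}}{k-1}$. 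Since $|\ff(i)|\ge|\ff(j)|$ we have $|\ff(i\bar j)|\ge|\ff(\bar i j)|$ (the common part $\ff(ij)$ cancels), so $|\ff(i\bar j)|\ge|\ff(\bar i j)|$ and both are cross-intersecting $(k-1)$-families on an $(n-2)$-set; by Theorem~\ref{thkk} (or directly, since $n-2\ge 2(k-1)$) the smaller of two cross-intersecting $(k-1)$-uniform families on $n-2$ vertices has size at most $\binom{n-3}{k-2}$. This gives $|\ff(\bar i j)|\le\binom{n-3}{k-2}$ and completes the proof.

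The main obstacle is the last bound: one must verify carefully that $\ff(\bar i j)$ and $\ff(i\bar j)$ are genuinely cross-intersecting (checking the edge case where a set of $\ff$ contains neither $i$ nor $j$ — such sets contribute to neither family, so they cause no problem), and then invoke the standard fact that in a cross-intersecting pair of $(k-1)$-uniform families on $n-2\ge 2(k-1)$ points the smaller family has at most $\binom{n-3}{k-2}$ members. One has to be slightly attentive about which of the two is the smaller one, but the hypothesis $|\ff(i)|\ge|\ff(j)|$ is exactly what pins this down. Everything else — the bookkeeping of degrees under the shift and the reduction $\gamma(\ff)-\gamma(S_{i,j}(\ff)) = \Delta(S_{i,j}(\ff))-\Delta(\ff)$ — is routine.
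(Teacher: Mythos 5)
Your proof is correct and follows essentially the same route as the paper: both arguments observe that $\ff(i\bar j)$ and $\ff(j\bar i)$ are cross-intersecting, use the hypothesis $|\ff(i)|\ge|\ff(j)|$ together with the Kruskal--Katona remark to get $|\ff(j\bar i)|\le\binom{n-3}{k-2}$, and note that the shift can raise the degree of $i$ (and hence $\Delta$) by at most $|\ff(j\bar i)|$ while leaving all other degrees unchanged or decreased. Your bookkeeping via the family $\mathcal D$ of genuinely moved sets is just a more explicit version of the paper's one-line degree argument.
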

\begin{proof} Indeed, it is easy to see that the families $\ff(i\bar j),\ff(j\bar i)$ are cross-intersecting. The assumption of the lemma implies $|\ff(i\bar j)|\ge |\ff(j\bar i)|$, and thus by Theorem~\ref{thkk} we have $|\ff(j\bar i)|\le {n-3\choose k-2}$ (see the remark after the theorem). On the other hand, after the  $(i,j)$-shift the degree of $i$  cannot increase by more than  $|\ff(j\bar i)|$, since it is only in the  sets from $\ff(j\bar i)$ that the element $j$ may be replaced by $i$. Therefore,   $\gamma(\ff)-\gamma(S_{i,j}(\ff))\le |\ff(j\bar i)|\le {n-3\choose k-2}$.
\end{proof}

Rearrange the elements in the order of decreasing degree, and do consecutively the $(1,j)$-shifts, $j=2,3,\ldots$, then the $(2,j)$-shifts, $j=3,4,\ldots$ etc., until either the family becomes shifted, or the diversity of the family becomes at most  ${n-3\choose k-2}+{n-4\choose k-2}$. We denote the obtained family by $\ff$ again.  In the latter case by Lemma~\ref{lemdiv} we have $\gamma(\ff)\ge {n-4\choose k-2}=\frac{n-k-1}{k-2}{n-4\choose k-3}>{n-4\choose k-3}.$ Therefore, this case is reduced to Case 2.\\

Finally, what if $\ff$ is shifted? We may suppose that\\
 1. $\gamma(\ff)>{n-3\choose k-2}+{n-4\choose k-2}$, \\
 2. $\mathcal F$ is shifted,\\
 3. $\ff$ has the smallest diversity among the families $\ff'$ of maximal size with $\gamma(\ff')\ge{n-3\choose k-2}$.\\
 The last inequality may look strange, since it does not coincide with the inequality defining Case 3. However, since we know that the theorem holds for families with diversity between ${n-3\choose k-2}$ and ${n-3\choose k-2}+{n-4\choose k-2}$ with $u=3$, we may include all potential families of maximal size with such diversity to the class of families in question.
 
  The element $1$ is the most popular among the sets in  $\ff$. Find the set $F\in \ff$ of maximal order and, in terms of the proof from Section~\ref{sec1}, apply Lemma~\ref{lembp} to get a family $\ff'=(\ff\setminus \aaa)\cup \bb$ of smaller diversity and at least as large as $\ff$. Note that, again in terms of Lemma~\ref{lembp}, $l\ge 2$ and thus $|\aaa|={n-2l\choose k-l}\le {n-4\choose k-2}.$  Therefore, we have $\gamma(\ff')\ge \gamma(\ff)- |\mathcal A| \ge {n-3\choose k-2}$, which is a contradiction with the choice of $\ff$.\end{proof}

\section{Conclusion}
Results of the type presented in this paper has proven to be useful in other questions concerning intersecting and cross-intersecting families. In particular, new results concerning the structure of intersecting and cross-intersecting families were obtained in \cite{FK8}, and degree versions of results on intersecting families were obtained in \cite{Kup10}. 

There are several questions that remain and that are important for applications. The following problem was proposed by Frankl: is it true that $\gamma(\ff)\le {n-3\choose k-2}$ for any $n\ge 3k$? This is easy to verify for shifted families. More generally, how large the diversity can be for different values of $n, 2k\le n\le 3k$? The following families are the natural candidates of families with the largest diversity for different ranges of $n$: $\ff_i:=\{F\in {[n]\choose k}: |F\cap [2i+1]|\ge i+1\}$.

Another problem driven by the applications is the following strengthening of the Erd\H os--Ko--Rado theorem:

\begin{pro} Given an intersecting family $\ff$ of $k$-sets of $[n]$ for $n > 2k$, prove that $\Delta(\ff)+C\gamma(\ff) \le {n-1\choose k-1}$ for the largest possible $C> 1$.
\end{pro}


\textsc{Acknowledgements. } We would like to thank Peter Frankl and the anonymous referee for bringing several references to our attention, and for useful comments on the manuscript that helped to improve the presentation.


\begin{thebibliography}{111}
\bibitem{EKR} P. Erd\H os, C. Ko, R. Rado, \textit{Intersection theorems for systems of finite sets}, The Quarterly Journal of Mathematics, 12 (1961) N1, 313--320.

\bibitem{Fra2} P. Frankl, {\it The Erdos-Ko-Rado theorem is true for $n= ckt$}, In Combinatorics (Proc. Fifth Hungarian Colloq., Keszthely, 1976) 1 (1978), 365--375.
\bibitem{Fra1} P. Frankl,  \textit{Erd\H os-Ko-Rado theorem with conditions on the maximal degree}, Journal of Combinatorial Theory, Series A 46 (1987), N2, 252--263.

\bibitem{Fra3} P. Frankl,  \textit{The shifting technique in extremal set theory}, Surveys in combinatorics 123 (1987), 81--110.

\bibitem{FF1} P. Frankl, Z. F\"uredi,  \textit{Non-trivial intersecting families}, Journal of Combinatorial Theory, Series A 41 (1986), N1, 150--153.

\bibitem{FK3} P. Frankl, A. Kupavskii,  \textit{A size-sensitive inequality for cross-intersecting families}, submitted to European Journal of Combinatorics, arXiv:1603.00936v1

\bibitem{FK8} P. Frankl, A. Kupavskii, {\it Counting intersecting and pairs of cross-intersecting families}, to appear in Combinatorics, Probability and Computing, arXiv:1701.04110.
    
\bibitem{FT} P. Frankl, N. Tokushige, \textit{Some best possible inequalities concerning cross-intersecting families}, Journal of Combinatorial Theory, Series A 61 (1992), N1, 87--97.


\bibitem{HM} A.J.W. Hilton, E.C. Milner, \textit{Some intersection theorems for systems of finite sets}, Quart. J. Math. Oxford 18 (1967), 369--384.

\bibitem{Ka} G. Katona, \textit{A theorem of finite sets}, ``Theory of Graphs, Proc. Coll. Tihany, 1966'', Akad, Kiado, Budapest, 1968; Classic Papers in Combinatorics (1987), 381-401.

\bibitem{Kr} J.B. Kruskal, \textit{The Number of Simplices in a Complex}, Mathematical optimization techniques 251 (1963), 251-278.

\bibitem{Kup10} A. Kupavskii, {\it Degree versions of intersection theorems}, preprint, 2017

\bibitem{LP} N. Lemons, C. Palmer, \textit{Unbalance of set systems}, Graphs and Combinatorics 24 (2008), N4, 361--365.



\bibitem{P} L. Pyber, \textit{A new generalization of the Erd\H os-Ko-Rado theorem}, J. Combin. Theory Ser. A 43 (1986), 85--90.

\end{thebibliography}
\end{document}